\definecolor{darkblue}{rgb}{0,0,.75}
\definecolor{darkgreen}{rgb}{0,0.5,0}
\newtheorem{theorem}{Theorem}[section]
\newtheorem{proposition}[theorem]{Proposition}
\newtheorem{corollary}[theorem]{Corollary}
\theoremstyle{definition}
\theoremstyle{remark}
\newtheorem{remark}[theorem]{Remark}
\numberwithin{equation}{section}
\newcommand{\ex}{\mathbf {E}}
\newcommand{\pr}{\mathbf {P}}
\newcommand{\R}{\mathbb R}
\newcommand{\Rd}{\mathbb R^d}
\newcommand{\spctim}{\mathbb R^{d+1}}
\newcommand{\del}{\partial }
\newcommand{\1}{\mathbf 1}
\author{Boris Baeumer \and Peter Straka}
\address{Boris Baeumer, Department of Mathematics \& Statistics, University of Otago, New Zealand}
\email{bbaeumer@maths.otago.ac.nz}
\address{Peter Straka, School of Mathematics and Statistics,
UNSW Australia,
Sydney, NSW 2052, Australia}
\email{p.straka@unsw.edu.au}
\begin{document}


\title{Fokker--Planck and Kolmogorov Backward Equations
for Continuous Time Random Walk Scaling Limits}




\begin{abstract}
It is proved that the distributions of scaling limits of Continuous Time Random Walks (CTRWs)
solve integro-differential equations akin to Fokker--Planck Equations for diffusion processes.
In contrast to previous such results, it is not assumed that the
underlying process has absolutely continuous laws.
Moreover, governing equations in the backward variables are
derived.
Three examples
of anomalous diffusion processes illustrate the theory. \\
\textbf{Keywords:} anomalous diffusion;
fractional kinetics;
fractional derivative;
subordination;
coupled random walks
\\
\textbf{2010 MSC:} 60F17; 60G22
\end{abstract}
\maketitle


\section{Introduction}

Continuous time random walks (CTRWs) are random walks with random waiting times $W_k$ between jumps $J_k$.  They have been applied in physics to a variety of systems exhibiting ``anomalous diffusion,''
with heavy-tailed waiting times leading to subdiffusive processes
whose variance grows $\propto t^\beta$, $0 < \beta < 1$,
and with heavy-tailed jumps leading to superdiffusive processes
which exhibit a faster scaling than Brownian motion
\citep{Metzler2000}. For a variety of applications, see e.g.\
\cite{Berkowitz06,Henry2000,FedotovIomin,Raberto2002,SchumerMIM}.
Scaling limits of CTRWs are non-Markovian time-changes of $\R^d$-valued Markov processes \citep{limitCTRW,Kolokoltsov09,Kobayashi2010a}.

The main tool for the analysis and computation of the distribution of CTRW limits is the (fractional) Fokker--Planck equation (FPE; considered here as a synonym with Kolmogorov Forward Equation) \citep{BMK00,Langlands2005a};
for textbooks with a quick introduction to fractional derivatives
see e.g.\ \cite{MeerschaertSikorskii} or
\cite{kolokoltsov2011markov}.
Governing FPEs have been derived in the literature, whose
solutions can be roughly classified as follows:
\begin{itemize}
\item
Classical (strong) solutions
\citep{Kolokoltsov09,HKU10,kolokoltsov2011markov,Magdziarz2014,Nane2015},
where the derivation assumes that the underlying space-time Feller
process (see below) has (differentiable) probability densities;
\item
Solutions in Banach space settings, in the framework of fractional evolution equations
\citep{PrussBook,BajlekovaThesis,Baeumer2001,BMMST,umarov2015introduction}
where the derivation is based on the assumption that the coefficients do not vary in time;
\item
Mild solutions based on Fourier-Laplace transforms
\citep{coupleCTRW,triCTRW,Jurlewicz}.
These allow for a coupling between jumps and waiting times
but assume constant coefficients.
\end{itemize}
One aim of this paper is to unify the above results and to
derive governing FPEs without these restricting assumptions.

A further important analytical and computational tool for anomalous diffusion processes is the (fractional) Kolmogorov backward equation.
It may be used to calculate distributions of occupation times
and first passage times for anomalous diffusion processes \citep{Carmi2010}.
In groundwater hydrology, scaling limits of CTRWs model the spread of contaminants in an aquifer \citep{Berkowitz06,SchumerMIM},
and (non-fractional) Kolmogorov backward equations have already been used to model the distribution of pollutant sources and travel times \citep{Neupauer1999}.
A mathematical framework for CTRW scaling limits and fractional Kolmogorov backward equations would hence be applicable to problems in groundwater hydrology, but has yet to be established,
which is the second aim of this paper.

The following topics are not discussed in this article in order to maintain the focus
on governing equations, but they should be mentioned as they are closely related:
\begin{itemize}
\item
If each waiting time and jump pair $(W_k,J_k)$ is coupled, 
their order is important:
CTRWs assume that $W_k$ precedes $J_k$, whereas OCTRWs (overshooting CTRWs)
assume that $J_k$ precedes $W_k$. 
The scaling limits of these two processes
may be as different as having mutually disjoint supports for all $t > 0$ \citep{Jurlewicz,StrakaHenry}.
This paper focuses on limits of CTRWs.
\item
In our analysis, we do not assume that any stochastic process
admits a Lebesgue density, hence the FPE (Th~\ref{th:FFPE}) is
given on the Banach space of positive measures;
such a result is apparently new.
If the Feller process $(A_r,D_r)$ below admits a (suitably regular)
density, then the CTRW limit does so, too \citep{Magdziarz2014}.
In general, however, and in particular for the
three examples discussed in the last section, the existence of densities
is unconfirmed.
\item
Our analysis defines CTRW limits via a continuous mapping approach
(Theorem~\ref{th:cont-map}), and the underlying assumption is the convergence
of Feller jump processes to a Feller diffusion process with jumps. If the
sequence of Feller jump processes is specified, then the sequence of CTRWs is
also specified, which can be illuminating for applications and the simulation
of sample paths. We skip this content with a warning that 
convergence can be difficult to establish 
\citep{JacodShiryaev,kolokoltsov2011markov}.
\end{itemize}

\paragraph{Organization of this paper:}
In Section~\ref{sec:limits} below, CTRW scaling limits are introduced
in a very general setting.
Section~\ref{sec:banach} introduces the Banach space setting needed
for the derivation of Kolmogorov backward equation
(Section~\ref{sec:backward})
and the Fokker--Planck Equation (FPE, Section~\ref{sec:forward}).
Finally, Section~\ref{sec:examples} contains three examples from statistical
physics which illustrate the forward and backward governing
equations.

\section{Scaling limits of CTRWs} \label{sec:limits}

We introduce CTRW limit processes by closely following  \cite{SemiMarkovCTRW}:
Let $c > 0$ be a scaling parameter, and write $A^c(n)$ for the position after the $n$-th jump, and $D^c(n)$ for the time of the $n$-th jump.  We assume that after each jump, a CTRW is renewed.
More precisely, $(A^c(n+1),D^c(n+1))$ depends on the previous trajectory $(A^c(0),D^c(0)), \ldots, (A^c(n),D^c(n))$ only through the latest pair $(A^c(n),D^c(n))$; but this is equivalent to $\{(A^c(n),D^c(n))\}_{n \in \mathbb N_0}$ being a Markov chain with state space $\spctim$.
We assume that the sequence $D^c(n)$ is strictly increasing.

By setting
$\bar A^c (t) = A^c(\lfloor t\rfloor)$,
$\bar D^c(t) = D^c(\lfloor t\rfloor)$,
a Markov chain as above defines a trajectory
$[0,\infty) \ni t \mapsto (\bar A^c(t),\bar D^c(t)) \in \spctim$.
This trajectory can then be mapped to a CTRW trajectory
as follows:
Define the right-continuous inverse
$E^c(t) := \inf\{u: \bar D^c(u) > t\}$ of $\bar D^c$.
Write $\bar A^c_-$ for the left-continuous version of $\bar A^c$.
Then the CTRW trajectory is given by
$$X^c(t) = \bar A^c_- \circ \bar E^c_-(t+),$$
that is, by the right-continuous version of the composition of the two left-continuous processes
$\bar A^c_- \circ \bar E^c_-$ \citep[Lemma~3.5]{StrakaHenry}.
One may then exploit the Skorokhod continuity of this path mapping to obtain the CTRW scaling limit as $c \to \infty$:

\begin{theorem} \label{th:cont-map}
Suppose that as $c \to \infty$, we have the weak convergence
\begin{align} \label{eq:spctim-conv}
\left \lbrace \left (\bar A^c(\lfloor cr \rfloor),
\bar D^c(\lfloor cr \rfloor)\right )\right \rbrace_{r \ge 0}
\Rightarrow
\left \lbrace \left ( A_r, D_r\right )\right \rbrace_{r \ge 0}
\end{align}
in the $J_1$ topology on c\`adl\`ag paths in $\spctim$, where
$D_r$ is a.s.\ strictly increasing and unbounded.
Then we also have the weak convergence
$$\left \lbrace X^c(t)\right \rbrace_{t \in \R} \Rightarrow \left \lbrace X(t) \right \rbrace_{t \in \R}$$
in the $J_1$ topology on c\`adl\`ag paths in $\Rd$, where
\begin{align}\label{eq:CTRW-limit}
X(t) = A_- \circ E(t+),
\end{align}
$A_-$ denotes the left-continuous process $\{A(t-)\}_{t \ge 0}$
and $E(t) = \inf\{u: D_u > t\}$.
\end{theorem}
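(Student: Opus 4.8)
The plan is to exhibit $X$ as the image of the space-time process $(A,D)$ under a deterministic path functional and then appeal to the continuous mapping theorem. Let $\Phi$ denote the map sending a c\`adl\`ag path $(a,d)$ in $\spctim$, with $d$ nondecreasing, to the $\Rd$-valued path $a_-\circ e_-(\,\cdot\,+)$, where $e(t)=\inf\{u:d_u>t\}$. By \eqref{eq:CTRW-limit} we have $X=\Phi(A,D)$, and since the inversion inside $\Phi$ undoes the operational-time rescaling $r\mapsto\lfloor cr\rfloor$, the functional applied to the rescaled pair returns exactly $X^c=\Phi(\bar A^c(\lfloor c\,\cdot\,\rfloor),\bar D^c(\lfloor c\,\cdot\,\rfloor))$. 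As \eqref{eq:spctim-conv} gives $J_1$ convergence of these rescaled pairs to $(A,D)$, the continuous mapping theorem reduces the assertion to showing that $\Phi$ is continuous, for the $J_1$ topologies on domain and codomain, at $\pr$-almost every realization of $(A,D)$. Equivalently, via the Skorokhod representation theorem, I would transfer to a common probability space on which the convergence in \eqref{eq:spctim-conv} holds a.s.\ and prove the pathwise implication $\Phi(\bar A^c(\lfloor c\,\cdot\,\rfloor),\bar D^c(\lfloor c\,\cdot\,\rfloor))\to\Phi(A,D)$.

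I would factor $\Phi$ into an inversion step $d\mapsto e$ and a composition step. For the inversion I would invoke the classical $J_1$-continuity of the first-passage map at any nondecreasing $d$ that is strictly increasing and unbounded. The hypothesis that $D$ is a.s.\ strictly increasing and unbounded places its realizations in this continuity set, and moreover forces $E$ to be \emph{continuous}, with flat stretches lying exactly over the jumps of $D$; this regularity of the inner time change is what makes the second step tractable. The composition then combines $A$, read in operational time, with $E$, which maps clock time to operational time; although these live on different time scales, the object that genuinely converges is the single jointly-convergent pair $(\bar A^c(\lfloor c\,\cdot\,\rfloor),\bar D^c(\lfloor c\,\cdot\,\rfloor))$, so I would establish (or cite) the continuity of the combined ``invert-and-compose'' operation acting on such pairs.

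The composition step is where I expect the genuine difficulty, since composition is not continuous on $J_1$ in general and since for coupled walks $A$ and $D$ may jump at a common operational epoch $r_0$. At such an epoch $E$ is flat at level $r_0$ over the clock-time interval $[D_{r_0-},D_{r_0})$, and $X$ holds the pre-jump value $A(r_0-)$ there, releasing the increment only at $t=D_{r_0}$; this is precisely the CTRW (as opposed to OCTRW) behaviour, and it is encoded in the left-continuous versions $a_-,e_-$ together with the outer right-continuous regularization $(\,\cdot\,+)$. The key point I would prove is that joint $J_1$ convergence aligns the coupled jumps of $\bar A^c$ and $\bar D^c$ in the limit and transports them consistently to $X^c$, so that these specific regularizations keep $\Phi$ continuous at paths with such common jumps. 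I would carry this out by propagating the near-optimal $J_1$ time-deformations of the pair through the inversion and the composition, using uniform control of $E$ away from its flat stretches to bound the resulting deformation of $X^c$.

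Finally, the set of paths at which $\Phi$ can fail to be continuous is contained in the event that $D$ is not strictly increasing or not unbounded, which carries probability zero by hypothesis. Hence $\Phi$ is a.s.\ continuous at $(A,D)$, and the continuous mapping theorem delivers $X^c\Rightarrow X$ in the $J_1$ topology, as claimed.
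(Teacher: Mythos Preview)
Your proposal is correct and follows the same continuous-mapping strategy as the paper, which simply invokes Proposition~2.3 of \cite{StrakaHenry} without further detail; what you have written is essentially a sketch of how that cited proposition is proved. The only remark is that you are reproving the black-boxed result rather than citing it, so your argument is more detailed than the paper's one-line proof, but the underlying route is identical.
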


\begin{proof}
This theorem is a direct consequence of Proposition 2.3 in \cite{StrakaHenry}.
\end{proof}

We stress that $E(t)$ and $X(t)$ are in general not Markovian.

Due to the above theorem, the large class of possible CTRW limit
processes is hence essentially given by
\eqref{eq:CTRW-limit}
and an $\spctim$ valued process $(A_r,D_r)$ which is the weak
limit of a sequence of (continuous time) Markov chains, where $D_r$ is strictly increasing and unbounded.
Such processes contain the class of diffusion processes with jumps, in the sense of \cite{JacodShiryaev}.
Details on the convergence of Feller-jump processes to a Feller diffusion process with jumps as in \eqref{eq:spctim-conv} are e.g.\ in Theorem~IX.4.8 of the mentioned textbook, and
in \cite{kolokoltsov2011markov} with somewhat more specificity.

The idea that CTRWs are essentially random walks in space-time
was seemingly first introduced explicitly to CTRWs by
\cite{Weron2008}, and used in \cite{HLS10PRL} to derive a Fractional
Fokker-Planck Equation (FPE) with space- and time-dependent drift.
(For a more detailed derivation of the FPE, see \cite{Magdziarz2014}.)

The following scaling limits $(A_r,D_r)$ have been
considered in the literature:
Uncoupled and coupled stable limits \citep{limitCTRW,coupleCTRW},
triangular array limits \citep{triCTRW,Jurlewicz},
position-dependent, stable-like limits \citep{Kolokoltsov09}
and stochastic differential equations with diffusion component $A_r$ and
subordinator $D_r$ \citep{Weron2008,Magdziarz2014}.

To specify the class of space-time limit processes $(A_r,D_r)$, we first define the operator
$\mathcal A_0: C_0^2(\spctim) \to C_0(\spctim)$
(with Einstein notation)
by
\begin{multline}
\label{eq:DAgen}
\mathcal A_0 f(x,s)
=  b^i(x,s) \del_{x_i} f(x,s)
+ \gamma(x,s) \del_s f(x,s)
+\frac{1}{2}  a^{ij}(x,s) \del_{x_i} \del_{x_j} f(x,s)\\
+\int\limits_{z \in \Rd} \int\limits_{w \ge 0} \left[f(x+z,s+w)
-f(x,s)-  z^i \1(\|z\| < 1) \del_{x_i} f(x,t)\right]  K(x,s;dz,dw).
\end{multline}
We adopt the following basic conditions on the coefficients are:
for $i,j = 1, \ldots, d$ the mappings $(x,s) \mapsto b^i(x,s)$,
$(x,s) \mapsto a^{ij}(x,s)$,
$(x,s) \mapsto \gamma(x,s)$,
are in $C_b(\spctim)$,
the measures $K(x,s;\cdot, \cdot)$ are L\'evy measures for every $(x,s) \in \spctim$ and
$Kg(x,s) := \iint K(x,s;dz,dw)g(z,w)$ lies in $C_b(\spctim)$ for every
$g \in B_b(\spctim)$ (bounded measurable) which is $0$ in a neighbourhood of the origin \citep{JacodShiryaev}.
We note however that these conditions are not sufficient
for $\mathcal A_0$ to generate a Feller process;
for sufficient conditions, consult e.g.\ \citet[Ch~6]{Applebaum}.

We assume that $(A_r,D_r)$ is a Feller process with strongly continuous
semigroup $(T_r, r \ge 0)$ acting on $C_0(\spctim)$.
The infinitesimal generator $\mathcal A$  of $(T_r, r \ge 0)$ is
such that $C_0^2(\spctim) \subset \mathrm{Dom}(\mathcal A)$ and $\mathcal A f= \mathcal A_0f$ for all $f \in C_0^2(\spctim)$; for details, see e.g.\ Ch~6.7 in \cite{Applebaum}.
We write $\pr^{x,s}$ for the (canonical) probability measure induced by $(T_r, r \ge 0)$ and
$\pr^{x,s}(A_0 = x, D_0 = s) = 1$.
The requirement that $D_r$ be strictly increasing a.s.\ means that $\gamma(x,s) \ge 0$, that the diffusive component of $D_r$ vanishes, that
the measures $K(x,s;\cdot, \cdot)$ are supported on $\Rd \times [0,\infty)$, and that $\int_0^1wK(x,s,\Rd,dw)<\infty$.
Moreover, the truncation term in the integral does not apply to the $d+1$st coordinate.
For technical reasons, we require another, not very restrictive assumption:
\begin{description}
\item[Transience in the time-component]
If $f\in C_0(\spctim)$ has support $\mathrm{Supp}(f) \subset\Rd\times(-\infty,B]$ for some $B\in\R$, then the potential of $f$,
\begin{align} \label{ass:transience}
Uf(x,s)
= \int_0^\infty T_r f(x,s)\,dr
\end{align}
is a continuous function with $Uf(\cdot,s)\in C_0(\Rd)$ for all $s$; i.e., with a slight abuse of notation there exists a kernel $U$ such that
$$Uf(x,s)=\int U(x,s;dz,dw)f(z,w).$$
\end{description}
For example, if $D_r$ is a subordinator then this assumption is satisfied \citep{Bertoin04}. 
$U$ is commonly referred to as the potential kernel of the semigroup $(T_r, r \ge 0)$.


We can now give a result which characterises the distribution of $X_t$ for Lebesgue-almost every $t \in \R$:

\begin{theorem}\label{prop:law}
Let $H(x,s;v) := K(x,s;\Rd,(v,\infty)), v > 0$, and assume the following
uniform integrability condition:
\begin{align*}
\int_0^1 \left( \sup_{(x,s) \in \spctim} H(x,s;v)\right)\,dv < \infty.
\end{align*}
Moreover, for $h(x,s)=f(x)g(s)$ with $f\in C_0(\Rd)$ and $g \in C_c(\R)$ (compact support) define the linear maps
\begin{align} \label{eq:Psi}
\Psi h(x,s)
&:= h(x,s) \gamma(x,s)
+ \int_{v > 0} h(x,s+v) H(x,s;v) \, dv.
\\
\Upsilon h(x,s)
&:= h(x,s) \gamma(x,s)
+ \int_{v>0} \int_{z\in\mathbb R^d} h(x+z, s+v) K(x,s;dz \times (v,\infty))\,dv
\end{align}

Then the CTRW limit process $X_t$ from \eqref{eq:CTRW-limit}
satisfies
\begin{align} \label{eq:lawX}
\int_{t > s} \ex^{x,s} [f(X_t)g(t)]\,dt = U \Psi h(x,s).
\end{align}
and the OCTRW limit
\begin{align*}
\int_{t > s} \ex^{x,s} [f(Y_t)g(t)]\,dt = U \Upsilon h(x,s).
\end{align*}
\end{theorem}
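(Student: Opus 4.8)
The plan is to express both sides as occupation functionals of the space-time Feller process $(A_r,D_r)$ and to match them through the potential kernel $U$. The left-hand side integrates $f(X_t)g(t)$ over all time levels $t>s$; since $X_t=A_{E(t)-}$, each level $t$ is associated with the instant $r=E(t)$ at which the time coordinate $D$ first exceeds $t$. The key structural observation is that $D$ can cross a level $t$ in exactly two ways: by \emph{creeping} (the continuous drift $\gamma$ carries $D_r$ through $t$) or by \emph{jumping} (a jump $\Delta D_r$ carries $D$ from $D_{r-}\le t$ to $D_{r-}+\Delta D_r>t$). First I would partition $\int_{t>s} f(X_t)g(t)\,dt$ according to which mechanism effects the crossing and treat the two pieces separately; they will produce the two summands of $\Psi h$.

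For the jump contribution, fix a jump of $(A,D)$ at time $r$ with $D_{r-}=:u$ and $\Delta D_r=:w$. Tracking $E$ shows that this single jump is the crossing event for precisely the overshoot interval $t\in[u,u+w)$, on which $X_t=A_{r-}$; hence it contributes $f(A_{r-})\int_u^{u+w} g(t)\,dt$. Summing over all jumps and applying the compensation formula (L\'evy system) attached to the characteristics in \eqref{eq:DAgen}, for which the predictable compensator of the jump measure is $K(A_{r-},D_{r-};dz,dw)\,dr$, I would rewrite the expected sum as $\ex^{x,s}\int_0^\infty \iint_{w>0} f(A_r)\big(\int_{D_r}^{D_r+w} g\big)\,K(A_r,D_r;dz,dw)\,dr$ (replacing $A_{r-},D_{r-}$ by $A_r,D_r$ is legitimate since the process jumps at only countably many, hence Lebesgue-null, times $r$). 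A substitution $t=D_r+v$ together with Fubini and the tail identity $\iint_{w>v}K(x,s;dz,dw)=H(x,s;v)$ turns the inner integral into $\int_{v>0} h(A_r,D_r+v)\,H(A_r,D_r;v)\,dv$, where $h(x,s)=f(x)g(s)$. Finally the defining property $\ex^{x,s}\int_0^\infty \phi(A_r,D_r)\,dr=U\phi(x,s)$ of the potential kernel identifies this with $U[\int_{v>0}h(\cdot,\cdot+v)H(\cdot,\cdot;v)\,dv]$, the second term of $U\Psi h$.

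For the creeping contribution I would change variables from the level $t$ to the index $r$ via $t=D_r$ on the set where $D$ increases continuously. Since the continuous part of $D$ has the representation $dD_r^c=\gamma(A_r,D_r)\,dr$ (read off from \eqref{eq:DAgen} under the standing assumption that $D$ has no diffusive component), and $X_{D_r}=A_{r-}=A_r$ off a null set there, this piece equals $\int_0^\infty f(A_r)g(D_r)\gamma(A_r,D_r)\,dr=\int_0^\infty h(A_r,D_r)\gamma(A_r,D_r)\,dr$, whose expectation is $U[h\gamma](x,s)$, the first term of $U\Psi h$. Adding the two contributions yields \eqref{eq:lawX}. The OCTRW identity follows the same way, the only change being that the overshooting limit records the \emph{post}-jump position, so that $Y_t=A_r$ on the overshoot interval and the functional in the compensation formula becomes $f(A_r+z)\int_{D_r}^{D_r+w}g$; carrying the spatial jump $z$ through the computation replaces $H$ by the full kernel and produces $U\Upsilon h$.

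The main obstacle is the rigorous justification of the decomposition and the compensation step, rather than any single computation. Applying the L\'evy system to the functional $\sum_r f(A_{r-})\int_{D_{r-}}^{D_r}g$ requires integrability, and this is exactly where the hypotheses enter: the uniform integrability condition $\int_0^1 \sup_{(x,s)}H(x,s;v)\,dv<\infty$ controls the small overshoots, the compact support $g\in C_c$ controls the large ones, and the transience-in-time assumption guarantees that $U$ is a finite kernel, so that $U\Psi h$ is well defined. I would also need to dispose of several null sets, namely the coincidence of a spatial jump with a creeping crossing, the behaviour at the boundary $t=s$, and the replacement of $A_{r-}$ by $A_r$ under the $dr$-integral. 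Once these measurability and integrability points are settled, the identification of each piece with a summand of $U\Psi h$ is the routine calculation sketched above.
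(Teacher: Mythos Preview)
Your argument is correct in its essentials, but it takes a different route from the paper's own proof. The paper does \emph{not} carry out the creeping/jumping decomposition or invoke the L\'evy system directly. Instead, after a short observation that $\Psi h$ has the right decay and support so that $U\Psi h$ is well defined, and a one-line reduction from $X_t$ to $X_{t-}$ (using that $X$ has only countably many jumps, hence $\int g(t)\ex^{x,s}[f(X_t)]\,dt=\int g(t)\ex^{x,s}[f(X_{t-})]\,dt$), the paper simply cites Theorem~2.3 of Meerschaert--Straka (2014), which already gives the pointwise-in-$t$ representation of $\ex^{x,s}[f(X_{t-})]$ as an integral against the potential measure $U(x,s;dy,dr)$ with weights $\gamma(y,t)$ and $H(y,r;t-r)$. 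Multiplying that formula by $g(t)$, integrating over $t$, and a change of variable $v=t-r$ produces $U\Psi h$ immediately.

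What you have sketched is, in effect, a self-contained proof of (the $g$-integrated version of) that cited theorem: the compensation formula applied to the functional $\sum_r f(A_{r-})\int_{D_{r-}}^{D_r}g$ together with the change of variable $t=D_r$ on the continuous part is precisely the mechanism behind the Meerschaert--Straka result. Your approach is therefore more transparent about where the two terms of $\Psi$ come from, and it identifies clearly which hypotheses (uniform integrability of $H$, compact support of $g$, transience in time) are used at which step. The paper's approach is shorter because it outsources this work to the reference; your approach is more informative because it unpacks the black box. Either way the proof goes through.
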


\begin{proof} First note that $H(x,s;v)$ is decreasing to zero on $v \in (0,\infty)$,
for every $(x,s) \in \spctim$, since it
is the tail function of a L\'evy measure. Hence $\Psi h(x,s)\to 0$ as $x,s\to\pm\infty$. Furthermore $\Psi h$ is continuous by the Dominated Convergence Theorem and its support bounded above in $s$. Hence $U\Psi h$ is well defined.

Let $h(x,t) = f(x) g(t)$ for some non-negative $f \in C_0(\Rd)$ and $g \in C_c(\R)$.
Then by Tonnelli's theorem, continuity of Lebesgue measure and the jumps of $X_t$ being countable the left-hand side of \eqref{eq:lawX} equals
\begin{multline*}
\int_{t \in \R} g(t) \ex^{x,s}[f(X_t)] \, dt
= \ex^{x,s}\left [ \int_{t \in \R} g(t) f(X_t) \,dt \right ]
= \ex^{x,s}\left [ \int_{t \in \R} g(t) f(X_{t-}) \,dt \right ]
\\
= \int_{t \in \R} g(t) \ex^{x,s}[f(X_{t-})] \, dt.
\end{multline*}
Now multiply the equation in Theorem 2.3 of \cite{SemiMarkovCTRW} by $g(t)$ (neglecting $Y_t, V_t$ and $R_t$) and integrate over $t \in \R$, to get
\begin{align*}
\int_{t \in \mathbb R} g(t) \mathbf E^{x,s}\left[ f(X_{t-}) \right]\,dt
= \int_{t \in \mathbb R} g(t) \int_{y \in \Rd} f(y) \gamma(y,t) U(x,s;dy,dt)
\\
+\int_{t \in \mathbb R} g(t) \int_{y \in \Rd} \int_{r \in [s,t]} U(x,s;dy,dr) H(y,r;t-r)f(y)\,dt
\end{align*}
Note that we may replace $u^{\chi,\tau}(x,t)\,dx\,dt$ by
$U^{\chi,\tau}(dx,dt)$ in the last equation
on p.1707 of \cite{SemiMarkovCTRW}.
A change of variable then yields \eqref{eq:lawX}.
\end{proof}

\section{A Banach space framework}
\label{sec:banach}

In order to properly define the backwards and forwards equations governing the CTRW limits we establish a Banach space framework on which $U$ is everywhere defined.
Consider $C_0(\R^d\times[a,b))$, the space of bounded continuous functions on $\R^d\times[a,b)$, vanishing at infinity and  $b$ but not necessarily at $a$; i.e. the closure of the space of continuous functions with compact support in $\Rd\times[a,b)$ with respect to the sup norm. The idea is that we will consider the limit process on this space or its dual space for $a\ll s,t\ll b$, where $s$ is the backward variable and $t$ the forward variable.

The crucial observation is that if $f(x,\sigma)=0$ for all $\sigma\ge s$, then, since $D_r$ is strictly increasing,
\begin{equation}\label{projectioneq}Uf(x,\sigma)=0=T_rf(x,\sigma)\end{equation}
 for all $\sigma\ge s$ and $r\ge 0$. This allows us to restrict/project the semigroup $\{T_r\}_{r\ge 0}$ and all of its related operators to $C_0(\R^d\times[a,b))$. In particular, for $\tilde f\in C_0(\R^d\times[a,b))$ pick $ f\in C_0(\spctim)$ such that $\tilde f(x,s)=f(x,s)$ for all $x\in\Rd$ and $s\in[a,b)$ and $f(x,s)=0$ for all $s>b$ and all $s<a-1$. Define the \emph{projection} of $\{T_r\}_{r\ge 0}$ via  $$\tilde T_r\tilde f(x,s):=T_rf(x,s)$$ for all $x\in \Rd$ and $s\in[a,b)$. This is well defined by \eqref{projectioneq} and hence also defines a strongly continuous semigroup with generator $\tilde {\mathcal A}$. Since $\tilde U:\tilde f\mapsto \int_0^\infty \tilde T_r\tilde f\,dr$ is defined for any continuous function with compact support, by Fatou's Lemma it is a bounded operator, and by the resolvent identity, $\tilde U=-\tilde {\mathcal A}^{-1}$. With the same argument, $\tilde\Psi$ is a bounded operator.

 In the following we will not distinguish between $T_r$ and $\tilde T_r$, etc.

\section{Kolmogorov backward equation}

\label{sec:backward}

We now define the transition kernel $P$ for CTRW limits via
\begin{align} \label{eq:P}
\int_{y \in \Rd} f(y) P(x,s; dy, t) = \ex^{x,s}[f(X_t)],
\end{align}
where $f \in C_b(\Rd)$. We interpret the starting point $x$ and starting time $s$ as the backward variables,
and $y$ and $t$ as the forward variables. We also define for $h(x,s)=f(x)g(s),$
\begin{align*}
Ph(x,s) := \int_{\tau > s} \int_{y \in \Rd} P(x,s;dy,\tau) h(y,\tau) \, d\tau
= \int_{\tau > s} g(\tau)\ex^{x,s}[f(X_\tau)]\,d\tau.
\end{align*}

\begin{theorem}[Kolmogorov Backward Equation for CTRW Limits]
\label{th:backwardDE}
Let $h\in C_0(\Rd\times[a,b))$. Then
$Ph$ lies in the domain of $\mathcal A$, and $Ph$ is the unique solution to the problem of finding $v\in C_0(\Rd\times[a,b))$ satisfying
\begin{align*}
-\mathcal A v = \Psi h.
\end{align*}
\end{theorem}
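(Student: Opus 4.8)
The plan rests on recognizing that the quantity $Ph$ has essentially already been computed, in Theorem~\ref{prop:law}. Comparing the definition of $Ph$ with the left-hand side of~\eqref{eq:lawX}, for any product function $h(x,s)=f(x)g(s)$ with $f\in C_0(\Rd)$ and $g\in C_c(\R)$ one reads off the identity $Ph = U\Psi h$ directly. The theorem therefore reduces to two assertions: first, that $U$ inverts $-\mathcal A$ on the Banach space $C_0(\Rd\times[a,b))$; and second, that this identity, together with the operator equation it produces, extends from product functions to all of $C_0(\Rd\times[a,b))$.

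For the first assertion I would invoke the framework of Section~\ref{sec:banach}, where it is established that the projected potential operator satisfies $\tilde U = -\tilde{\mathcal A}^{-1}$; that is, $U$ is a bounded bijection onto $\mathrm{Dom}(\mathcal A)$ with $-\mathcal A U = \mathrm{Id}$. Before applying this I must confirm that $\Psi h$ is a genuine element of $C_0(\Rd\times[a,b))$, but this is exactly what the proof of Theorem~\ref{prop:law} supplies: since $g$ has compact support and $H(x,s;v)$ is a L\'evy tail, $\Psi h$ is continuous by Dominated Convergence, vanishes at infinity, and has support bounded above in $s$. Granting this, $Ph = U\Psi h$ lies in $\mathrm{Dom}(\mathcal A)$ and $-\mathcal A(Ph) = -\mathcal A U \Psi h = \Psi h$, which is the governing equation for product $h$.

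To pass to a general $h\in C_0(\Rd\times[a,b))$ I would use a density argument. Finite linear combinations of product functions $f(x)g(s)$ are dense in $C_0(\Rd\times[a,b))$ by Stone--Weierstrass, and both $\Psi$ and $P$ act boundedly on this space (boundedness of $\tilde\Psi$ is noted in Section~\ref{sec:banach}, and $P=U\Psi$ is then bounded as a composition). Choosing $h_n\to h$ with each $h_n$ a linear combination of products, I get $Ph_n = U\Psi h_n \to U\Psi h$ and $-\mathcal A(Ph_n)=\Psi h_n\to\Psi h$; since $\mathcal A$ is a closed operator, the limit $U\Psi h$ lies in $\mathrm{Dom}(\mathcal A)$ and satisfies $-\mathcal A(U\Psi h)=\Psi h$, and one identifies $U\Psi h$ with $Ph$ by continuity of $P$. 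Uniqueness is then immediate from $\mathcal A$ being boundedly invertible: any $v\in C_0(\Rd\times[a,b))$ with $-\mathcal A v=\Psi h$ must equal $U\Psi h = Ph$.

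The main obstacle I anticipate is bookkeeping rather than conceptual. One must check carefully that $\Psi$ maps $C_0(\Rd\times[a,b))$ into itself, preserving both the vanishing-at-$b$ condition and the support-bounded-above structure, so that the projected operators of Section~\ref{sec:banach} genuinely apply; it is precisely the uniform integrability hypothesis of Theorem~\ref{prop:law} that underwrites the boundedness of $\Psi$ here. The density step itself is routine, provided one verifies that the approximating linear combinations of products remain within the prescribed space.
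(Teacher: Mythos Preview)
Your proposal is correct and follows essentially the same route as the paper's proof: establish $Ph=U\Psi h$ for product functions via Theorem~\ref{prop:law}, invoke $U=-\mathcal A^{-1}$ from Section~\ref{sec:banach}, extend to general $h$ by density (totality of products), boundedness of $\Psi$, and closedness of $\mathcal A$, and conclude uniqueness from the bounded invertibility of $\mathcal A$. Your write-up is in fact more explicit than the paper's about the bookkeeping (verifying $\Psi h\in C_0(\Rd\times[a,b))$, citing Stone--Weierstrass), but the underlying argument is the same.
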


\begin{proof}
For $h(x,s)=f(x)g(s)$, the statement follows directly by adapting the statement of  Proposition~\ref{prop:law} onto  $C_0(\R^d\times[a,b))$. For general $h\in C_0(\Rd\times[a,b))$ the statement follows from the closedness of $\mathcal A$, boundedness of $\Psi$ and the fact that functions of the form $f(x)g(s)$ are a total set. Uniqueness follows from the fact that $\mathcal A$ has the bounded inverse $-U$.
\end{proof}

\begin{remark}
Recall that in $P(x,s;dy,t)$, we call $(x,s)$ the ``backward'' variables and
$(y,t)$ the ``forward'' variables.  Unlike most backward equations,
Th~\ref{th:backwardDE} does not directly relate the $s$-derivative of the
transition kernel $P$ to the generator of spatial motion (acting on $x$). However considering the limit of solutions $P h_n$ with $h_n(x,s)=f(x)g_n(s)$ and $g_n\to\delta_t$ with $\mathrm{supp}(g_n)\subset(t,t+1)$, by the right continuity of $X_t$ and \eqref{eq:P},  $$Ph_n(x,s)\to E^{x,s}f(X_t).$$
\end{remark}

\begin{remark}
\cite{Carmi2010} derive a ``backward fractional Feynman-Kac'' equation, in the
case where jumps have finite variance and are independent of the waiting times.
In its generality, Th~\ref{th:backwardDE} above appears to be new.
\end{remark}

\section{Fokker--Planck Equation}

\label{sec:forward}
In this section we
show that the probability law of the CTRW limit is a unique
solution to a FPE as long as the tail of the temporal L\'evy measure is time independent or the corresponding operator is invertible.
In particular, we are interested in formulating the problem that is solved by the law of $X_t$ given that $X_s=\mu$.

Recall that by the Riesz Representation Theorem the dual space of $ C_0(\R^d\times[a,b))$ is the space of regular bounded measures $\mathcal M( \R^d\times[a,b))$ \citep{rudin1987real}
and that the adjoint of a densely defined linear operator $A$ on a Banach space $X$  is a uniquely defined closed operator on its dual $X^*$. It is defined via
$x^*\in{\mathrm {Dom}}(A^*)$ if there exists $y^*\in X^*$ such that $x^*(Ax)=y^*(x)$ for all $x\in {\mathrm {Dom}}(A)$, and then $A^*x^*=y^*$ \citep{Phillips1955}.
This is relevant as
$$ P^*(\mu \otimes \delta_s)(dy,dt)=\int_{x\in\Rd} P(x,s;dy,t)\mu(dx)\,dt, \quad t \ge s$$
is the quantity of interest (its right-continuous version).

As $U$ and $\Psi$ are bounded operators, so are $U^*$ and $\Psi^*$. In particular, a simple substitution shows that
$$\Psi^*h(dy,dt)=h(dy,dt)\gamma(y,t)+dt\int_{a\le\sigma<t} h(dy,d\sigma)H(y,\sigma;t-\sigma).$$
As is common, we define the convolution $\star$ in the variable $t$ to be
$$(\mu \star_t \nu)(dx,dt) = \int_{s \in [a,b)} \mu(x,dt-s) \nu(dx,ds)$$
for every $\nu \in \mathcal M(\Rd \times [a,b))$
and family of measures $\{\mu(x,dt)\}_{x \in \Rd}$ on $\R$
such that $x \mapsto \mu(x, B)$ is measurable for every Borel set $B \subset \R$.

\begin{proposition}
  If $\gamma(y,t)=\gamma(y)$ and $H(y,t;v)=H(y;v)$ do not depend on $t$, then $\Psi^*$ is one-to-one and
  $$(\Psi^*)^{-1}h=\frac{d}{dt}V\star_t h$$ for $h$ in the range of $\Psi^*$. The Laplace transform of the measure $V(y,\cdot)$ is given by
  $$\int_0^\infty e^{-\lambda t}V(y,dt)=\frac1\lambda \frac{1}{\gamma(y)+\hat H(y,\lambda)} .$$
\end{proposition}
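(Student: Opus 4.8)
The plan is to exploit the convolution structure that time-independence imposes on $\Psi^*$ and to reduce the whole statement to an identity between Laplace transforms in the forward variable $t$. When $\gamma(y,t)=\gamma(y)$ and $H(y,t;v)=H(y;v)$, the formula for $\Psi^*$ stated above reads
$$\Psi^* h(dy,dt) = \gamma(y)\, h(dy,dt) + dt \int_{a \le \sigma < t} h(dy,d\sigma)\, H(y;t-\sigma),$$
which is exactly convolution in $t$ by the $y$-indexed kernel $\Phi(y,dt) = \gamma(y)\delta_0(dt) + H(y;t)\1(t>0)\,dt$; that is, $\Psi^* h = \Phi \star_t h$. First I would take the Laplace transform in $t$ (treating $y$ as a parameter), turning this convolution into multiplication by $\hat\Phi(y,\lambda)=\gamma(y)+\hat H(y,\lambda)$, where $\hat H(y,\lambda)=\int_0^\infty e^{-\lambda v}H(y;v)\,dv$. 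This transform is finite for $\lambda>0$ because $H(y;\cdot)$ is integrable near $0$ by the uniform integrability hypothesis of Theorem~\ref{prop:law}, and bounded and decreasing at infinity.

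The key structural observation is that $\hat\Phi$ is, up to a factor $\lambda$, the Laplace exponent of the temporal part of the process. Integrating by parts gives $\lambda\hat H(y,\lambda)=\int_0^\infty(1-e^{-\lambda w})K(y;\Rd,dw)$, so that $\lambda\hat\Phi(y,\lambda)=\gamma(y)\lambda+\int_0^\infty(1-e^{-\lambda w})K(y;\Rd,dw)=:\psi(y,\lambda)$ is a Bernstein function of $\lambda$ — indeed the Laplace exponent of the subordinator governing $D_r$, well defined since $\gamma(y)\ge 0$ and $\int(1\wedge w)K(y;\Rd,dw)<\infty$ under the standing assumptions. Since $D_r$ is strictly increasing, $\psi(y,\lambda)>0$ for every $\lambda>0$, hence $\hat\Phi(y,\lambda)>0$. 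Multiplication by a strictly positive function is injective, so by uniqueness of the Laplace transform $\Psi^*$ is one-to-one.

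To invert, I would define $V(y,\cdot)$ through its Laplace transform $\hat V(y,\lambda)=1/\psi(y,\lambda)=\lambda^{-1}(\gamma(y)+\hat H(y,\lambda))^{-1}$, which is precisely the claimed formula. Since the reciprocal of a Bernstein function is completely monotone, $1/\psi(y,\cdot)$ really is the Laplace transform of a positive (Radon) measure $V(y,\cdot)$ — concretely, the potential measure of the subordinator. The inversion identity is then a one-line Laplace computation: $\widehat{V\star_t h}(y,\lambda)=\hat V(y,\lambda)\hat h(y,\lambda)$, and differentiation in $t$ corresponds to multiplication by $\lambda$, so $\widehat{\tfrac{d}{dt}(V\star_t h)}=\lambda\hat V\hat h=\hat\Phi^{-1}\hat h$; composing with $\Psi^*$ (multiplication by $\hat\Phi$) returns $\hat h$, establishing $\Psi^*\big(\tfrac{d}{dt}V\star_t h\big)=h$.

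I expect the main obstacle to be making $\tfrac{d}{dt}V\star_t h$ rigorous as a bounded measure rather than a mere distribution, and correspondingly justifying the passage ``$\tfrac{d}{dt}\leftrightarrow$ multiplication by $\lambda$'' without a spurious boundary term at $t=a$. This is exactly where the restriction to $h$ in the range of $\Psi^*$ enters: for such $h$ the preimage is a genuine measure, so $\tfrac{d}{dt}(V\star_t h)$ must coincide with it, and the support of $h$ in $[a,b)$ together with the support of $V(y,\cdot)$ in $[0,\infty)$ ensures $V\star_t h$ vanishes below $t=a$, removing the boundary contribution. I would also confirm the finiteness of $\hat H(y,\lambda)$ and the local finiteness of $V(y,\cdot)$ uniformly enough to keep $\tfrac{d}{dt}V\star_t h$ inside $\mathcal M(\Rd\times[a,b))$; these follow from the integrability hypotheses and the transience-in-time assumption, which guarantees the potential measure is locally finite.
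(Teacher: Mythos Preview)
Your proposal is correct and follows essentially the same approach as the paper: identify $V(y,\cdot)$ as the renewal (potential) measure of the subordinator with drift $\gamma(y)$ and L\'evy measure $K(y;\Rd,dw)$, and then verify the inversion formula via Laplace transforms. The paper's own proof is a two-sentence sketch invoking exactly these two ingredients, so your write-up is a faithful and considerably more detailed unpacking of what the authors had in mind.
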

\begin{proof}
  The measures $V(y,\cdot)$ exist since they are renewal measures of subordinators with (fixed) drift $\gamma(y)$ and L\'evy measure $h(y;dw)$ \citep{Bertoin04}. The statement then follows from basic Laplace transform theory.
\end{proof}

\begin{theorem}   [Fokker-Planck Equation for CTRW Limits]
\label{th:FFPE}
Assume $\Psi^*$ is one-to-one. 
Let the initial condition $h$ be given by
$h(dy,dt) = \mu(dy) \delta_s(dt)$, where $\mu\in \mathcal M(\Rd)$
and $a<s<b$. Then $P^*h$ is the unique solution to the problem of finding $v\in \mathcal M(\Rd\times[a,b))$ satisfying
  $$\mathcal A^*(\Psi^*)^{-1}v=-h.$$
\end{theorem}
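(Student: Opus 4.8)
The plan is to transfer the backward result (Theorem~\ref{th:backwardDE}) to the dual space by taking adjoints, and then to invert $\Psi^*$ using the assumed injectivity. The whole strategy hinges on the operator identity that is implicit in Theorem~\ref{th:backwardDE}: since $Ph$ solves $-\mathcal A v = \Psi h$ uniquely with bounded inverse $-U$, we have on $C_0(\Rd\times[a,b))$ the operator factorization $P = U\Psi$, equivalently $-\mathcal A P = \Psi$, or $P = (-\mathcal A)^{-1}\Psi$. Dualizing, and using that taking adjoints reverses composition, I expect $P^* = \Psi^* U^* = \Psi^* (-\mathcal A^*)^{-1}$, so that $(-\mathcal A^*) P^* = \Psi^*$ as an identity of (possibly unbounded, but closed) operators on $\mathcal M(\Rd\times[a,b))$.

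First I would pin down the adjoint relations carefully. The operators $U$ and $\Psi$ are bounded (established in Section~\ref{sec:banach}), so $U^*$ and $\Psi^*$ are bounded with $U^* = (-\mathcal A^*)^{-1} = (-\mathcal A)^{-1*}$, the last equality because the adjoint of a boundedly invertible closed operator is the inverse of the adjoint. Applying $*$ to the bounded identity $P = U\Psi$ gives $P^* = \Psi^* U^*$ directly, with no domain subtleties since everything in sight is bounded. Then I would act with $\mathcal A^*$: writing $h$ for the given initial datum $\mu(dy)\delta_s(dt)$, the candidate solution $v = P^*h$ satisfies $U^* v = U^* P^* h = U^* \Psi^* U^* h = (U\Psi)^* U^* h$... — more cleanly, from $P^* = \Psi^*(-\mathcal A^*)^{-1}$ I get $(-\mathcal A^*)^{-1}$ applied appropriately. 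The target equation is $\mathcal A^*(\Psi^*)^{-1} v = -h$, so I must combine the factorization with the injectivity of $\Psi^*$.

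The clean route is this. Starting from $P = U\Psi$ on $C_0(\Rd\times[a,b))$, since $U = (-\mathcal A)^{-1}$ I have $\Psi = (-\mathcal A)P$, hence $P = U\Psi$ also reads as $\Psi^{-1}$ not existing but $U^{-1}P = \Psi$ giving $-\mathcal A P = \Psi$. Dualizing the bounded operators, $P^* = \Psi^* U^*$ where $U^* = (-\mathcal A^*)^{-1}$. Now apply $(\Psi^*)^{-1}$, which exists by assumption on the range of $\Psi^*$: since $P^* h = \Psi^* U^* h$ lies in the range of $\Psi^*$, I obtain $(\Psi^*)^{-1} P^* h = U^* h = (-\mathcal A^*)^{-1} h$. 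Applying the closed operator $\mathcal A^*$ to both sides and using that $(-\mathcal A^*)^{-1} h \in \mathrm{Dom}(\mathcal A^*)$ yields $\mathcal A^* (\Psi^*)^{-1} P^* h = \mathcal A^* (-\mathcal A^*)^{-1} h = -h$, which is exactly the asserted equation with $v = P^*h$. For uniqueness, if $v$ solves $\mathcal A^*(\Psi^*)^{-1} v = -h$, then $(\Psi^*)^{-1} v = (-\mathcal A^*)^{-1} h = U^* h$, so $v = \Psi^* U^* h = P^* h$; injectivity of $\Psi^*$ together with bounded invertibility of $\mathcal A^*$ forces this to be the only solution.

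The main obstacle I anticipate is \emph{not} the algebra of adjoints but the domain bookkeeping: I must verify that $(\Psi^*)^{-1}$ is genuinely applicable to $v = P^*h$ (i.e.\ that $P^*h$ lands in the range of $\Psi^*$, which follows from $P^* = \Psi^* U^*$ and boundedness of $U^*$), and that $(\Psi^*)^{-1} P^* h$ lies in $\mathrm{Dom}(\mathcal A^*)$ so that $\mathcal A^*$ may legitimately act — this is where I lean on $U^* h = (-\mathcal A^*)^{-1}h$ mapping into the domain of the closed operator $\mathcal A^*$, a standard property of the inverse of a closed operator. A secondary point worth a sentence is that $\Psi^*$ as written acts on measures via the explicit formula given before the proposition, and I should confirm that the abstract adjoint coincides with that concrete expression on the relevant class of $h$, but this was already recorded in the text preceding the theorem. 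So the proof is essentially a dualization of Theorem~\ref{th:backwardDE} combined with the hypothesis that $\Psi^*$ is one-to-one, and the only care required is in tracking domains of the unbounded closed operator $\mathcal A^*$.
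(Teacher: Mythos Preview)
Your proposal is correct and follows essentially the same route as the paper's own proof: dualize the identity $P = U\Psi$ to get $P^* = \Psi^* U^*$, apply $(\Psi^*)^{-1}$ using injectivity, then apply $\mathcal A^*$ using $U^* = (-\mathcal A^*)^{-1}$; uniqueness comes from invertibility of $\mathcal A^*$ together with injectivity of $\Psi^*$. Your attention to the domain bookkeeping (that $P^*h$ lies in the range of $\Psi^*$ and that $U^*h \in \mathrm{Dom}(\mathcal A^*)$) is exactly what the paper records, only more explicitly.
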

\begin{proof}
 On $C_0(\R^d\times [a,b))$, $P\phi=U\Psi\phi$ for all $\phi$. Hence
$ P^*h=\Psi^*U^*h$ and equivalently, $(\Psi^*)^{-1}P^*h=U^*h$
for all $h\in\mathcal M(\R^d\times [a,b))$.  Therefore $(\Psi^*)^{-1}P^*h$ is in the range of $U^*$ and hence $(\Psi^*)^{-1}P^*h\in \mathrm{Dom}(\mathcal A^*)$ and
$$\mathcal A^*(\Psi^*)^{-1}P^*h=\mathcal A^* U^* h=-h.$$
Since $\mathcal A^*$ is invertible, $\mathcal A^*(\Psi^*)^{-1}u=0$ implies $u=0$, which implies uniqueness.
\end{proof}

\begin{corollary}
  The transition kernel $P(x,s;dy,t)$ satisfies
  $$-\mathcal A^*(\Psi^*)^{-1}P(x,s;dy,t)dt=\delta_x(dy)\delta_s(dt).$$
\end{corollary}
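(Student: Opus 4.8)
The plan is to read this off Theorem~\ref{th:FFPE} by specializing the initial measure to a point mass. Concretely, I would apply the theorem with $\mu = \delta_x$, the Dirac mass at $x \in \Rd$. First I would check that this is an admissible choice: a point mass is a regular bounded measure, so $\delta_x \in \mathcal M(\Rd)$, and since $a < s < b$ the product $h(dy,dt) = \delta_x(dy)\,\delta_s(dt)$ lies in $\mathcal M(\Rd \times [a,b))$, exactly the class of initial conditions covered by the theorem. The standing hypothesis that $\Psi^*$ be one-to-one is inherited, so nothing new needs to be verified there.

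Next I would identify $P^*h$ for this particular $h$ by unwinding the definition of the adjoint kernel recorded just before the FPE theorem,
\begin{align*}
P^*(\mu \otimes \delta_s)(dy,dt) = \int_{x' \in \Rd} P(x',s;dy,t)\,\mu(dx')\,dt, \quad t \ge s.
\end{align*}
Substituting $\mu = \delta_x$ and integrating against the point mass collapses the $x'$-integral and gives $P^*h(dy,dt) = P(x,s;dy,t)\,dt$. This is the only computation in the argument, and it is immediate.

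With these two identifications in hand the conclusion is forced. Theorem~\ref{th:FFPE} asserts that $P^*h$ is the (unique) $v \in \mathcal M(\Rd \times [a,b))$ solving $\mathcal A^*(\Psi^*)^{-1} v = -h$. Replacing $v$ by $P(x,s;dy,t)\,dt$ and $h$ by $\delta_x(dy)\,\delta_s(dt)$ yields
\begin{align*}
\mathcal A^*(\Psi^*)^{-1}\big(P(x,s;dy,t)\,dt\big) = -\,\delta_x(dy)\,\delta_s(dt),
\end{align*}
which is the claim after multiplying through by $-1$. As a bonus, the uniqueness part of the theorem shows that $P(x,s;dy,t)\,dt$ is in fact the \emph{only} measure satisfying this equation.

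The argument is essentially a direct corollary, so there is no serious obstacle; the only points deserving care are measure-theoretic bookkeeping. One must confirm that the specialization $\mu = \delta_x$ genuinely falls under the theorem's hypotheses (rather than being a boundary case requiring approximation), and that the factor $dt$ produced in $P^*h$ matches the $dt$ written in the corollary's display, so that the final line is a bona fide identity of measures on $\Rd \times [a,b)$ and not merely a formal rearrangement. Since both checks are routine given the framework already established, the proof is short.
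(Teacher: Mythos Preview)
Your argument is correct and matches the paper's approach: the corollary is stated without proof immediately after Theorem~\ref{th:FFPE}, and your specialization $\mu=\delta_x$ together with the identification $P^*(\delta_x\otimes\delta_s)(dy,dt)=P(x,s;dy,t)\,dt$ is exactly the intended reading.
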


\subsection*{The Fokker--Planck operator}
In case that temporal and spatial jumps are uncoupled; i.e., $K$ is concentrated on the axes, that is
\begin{align} \label{eq:decoupK}
K(x,s,dz,dw)=K(x,s;dz \times \{0\})+K(x,s;\{0\} \times dw),
\end{align}
above equation simplifies further as it allows the splitting of $\mathcal A=\mathcal D+\mathcal L$ into a temporal operator $\mathcal D$ and a spatial operator $\mathcal L$. In particular, after integration by parts,
$$\mathcal D f(x,s)=\gamma(x,s) \frac{\partial}{\partial s}f(x,s)+\int_{v>0} \frac{\partial}{\partial s}f(x,s+v)H(x,s;v)\,dv$$
and
\begin{equation*}\begin{split}\mathcal Lf(x,s)=& b^i(x,s) \del_{x_i} f(x,s)
+\frac{1}{2}  a^{ij}(x,s) \del_{x_i} \del_{x_j} f(x,s)\\
&+\int\limits_{z \in \Rd} \left[f(x+z,s)
-f(x,s)-  z^i \1(\|z\| < 1) \del_{x_i} f(x,t)\right]  K(x,s;dz,\{0\})\\
\end{split}\end{equation*}
Identifying $\mathcal Df$ as $\Psi \frac{\partial}{\partial s}f(x,s)$, taking adjoints we obtain $$\mathcal A^*f(x,t)=- \frac{\partial}{\partial t}\Psi^*f(x,t)+\mathcal L^*f(x,t).$$
Hence the governing equation simplifies to
\begin{equation}\label{FKeq}\frac{\partial}{\partial t}P^*h=\mathcal L^*(\Psi^*)^{-1}P^*h+h,\end{equation}
earning $\mathcal L^*$ its designation as \emph{Fokker-Planck} operator.

\begin{remark}
Under the assumption that the law of the CTRW limit has Lebesgue
densities, \eqref{FKeq} is equivalent to Equation (45) in
\cite{Kolokoltsov09}.
\end{remark}


\subsection*{The memory kernel}
The non-Markovian nature of the underlying CTRW limit is
represented by a `memory kernel' as in
\citep{Sokolov2006e}.
Their Equation (8) corresponds to  \eqref{FKeq}
where $(\Psi^*)^{-1}$ ``$= \del / \del t\, M \star_t $ ''.
This identifies the anti-derivative of $(\Psi^*)^{-1}$ as the memory kernel $M(t)$. If the coefficients of
 $\gamma(y,t) = \gamma(y)$ and $H(y,t;w) = H(y;w)$ do not depend on $t$, then  $M=V$.
In many cases the measures $V(y,dt)$ are Lebesgue-absolutely continuous
with density $v(y,t)$; e.g.\ when $\gamma(y) > 0$
\citep[Prop~1.7]{Bertoin04}.

\section{Anomalous Diffusion: Examples} \label{sec:examples}

\subsection{Subdiffusion in a time-dependent potential}

Let $\beta \in (0,1)$ and define
\begin{align*}
H_\beta(w) := \frac{1}{\Gamma(1-\beta)}w^{-\beta}, 
\quad 
h_\beta(w) := -\frac{\del}{\del w} H_\beta(w)
= \frac{\beta}{\Gamma(1-\beta)} w^{-1-\beta}.
\end{align*}
We introduce the scaling parameter $c > 0$, and define 
\begin{align}\label{eq:hbetatau2} 
H_\beta^c(w) := 1 \wedge [H_\beta(w)/c], 
\quad 
h_\beta^c(w) := \mathbf 1\{w > (\Gamma(1-\beta) c)^{-1/\beta}\} h_\beta(w)/c.
\end{align}
Note that $H_\beta^c(w)$ is the tail function of a Pareto law
on $(0,\infty)$, and $h_\beta^c(w)$ is its density.
This law shall be assumed for the distribution of waiting times. 
We also assume
probabilities $\ell(x,t)$ and $r(x,t)$ to jump left or right on a one-dimensional lattice.  A CTRW with such jumps and waiting times may be represented as a Markov chain in $\spctim$, with transition kernel
\begin{align} \label{eq:kernel1}
K^c(x,s;dz,dw) &=
\left[\ell(x,s+w) \delta_{-\Delta x}(dz) + r(x,s+w) \delta_{\Delta x}(dz)\right]
h_\beta^c(w)dw.
\end{align}
Such CTRWs are a useful model for subdiffusive processes, i.e.\ processes whose variance grows slower than linearly \citep{Metzler2000}.
For the limit to exist as $c \to \infty$, we assume
\begin{align}
\ell(x,s) + r(x,s) &= 1, & r(x,s) - \ell(x,s) =  b(x,s) \Delta x.
\end{align}
where $b(x,s)$ is a bias and $\Delta x$ is the lattice spacing.
The bias varies with space and time and is given e.g.\ by the concentration gradient of a chemo-attractive substance, which itself diffuses in space \citep{HenryLanglands10chemo}.

We consider the limit $c \to \infty$, with $(\Delta x)^2 = 1/c$.
The limiting coefficients of $(A_r,D_r)$ are
\begin{gather*}
a(x,s) = 1, \quad  b(x,s) = \text{given} , \quad \gamma(x,s) = 0, \quad
K(x,s;dz,dw) = \delta(dz) h_\beta(w)dw,
\end{gather*}
where
\begin{align} \label{eq:hbeta}
h_\beta(w) = \beta w^{-\beta-1}  \1\{w > 0\} / \Gamma(1-\beta).
\end{align}
and $\delta$ denotes the Dirac measure concentrated at
$0 \in \Rd$.
Apply \citet[Th~IX.4.8]{JacodShiryaev} to see that the convergence \eqref{eq:spctim-conv} holds.
The infinitesimal generator reads
$$\mathcal A f(x,s) = b(x,s) \del_x f(x,s) + \frac{1}{2} \del_x^2 f(x,s)
- \del_{-s}^\beta f(x,s)$$
where $\del_{-s}^\beta f$ denotes the negative fractional derivative \citep{MeerschaertSikorskii,kolokoltsov2011markov}.
Given a suitable ``terminal condition'' $f \in C_b(\spctim)$, the Kolmogorov backward equation is hence
\begin{align*}
\del_{-s}^\beta Pf(x,s)
= b(x,s) \del_x Pf(x,s)
 + \frac{1}{2} \del_x^2 Pf(x,s)
 + \del_{-s}^{\beta-1} f(x,s)
\end{align*}
where the negative Riemann-Liouville fractional integral of order
$\beta > 0$ is denoted by
\begin{align}\label{eq:frac-int}
\del_{-t}^{-\beta} f(t)
:=\frac{1}{\Gamma(\beta)} \int_{r > 0} f(t+r) r^{\beta - 1}\,dr
\end{align}
(see also \cite{BajlekovaThesis}).

For the forward equation, we note that
$H(x,s;w) = H_\beta(w) := w^{-\beta}/\Gamma(1-\beta)$ has Laplace transform $\hat H_\beta(\lambda) = \lambda^{\beta-1}$.
Hence $\hat V(\lambda) = \lambda^{-\beta}$, which inverts to
$V(y,r) = r^{\beta-1}/\Gamma(\beta) = H_{1-\beta}(r)$.
Thus $(\Psi^*)^{-1}$ may be interpreted as the fractional derivative $\del_t^{1-\beta}$.
The adjoint of $\mathcal L$ is given by
$$\mathcal L^*f(dy,dt) = -\del_y[b(y,t)f(dy,dt)] + \frac{1}{2} \del_y^2 f(dy,dt),$$
hence the distributional Fokker--Planck equation is
\begin{align*}
\del_t P^*[\mu \otimes \delta_s] &=
-\del_y \left[b\, \del_t^{1-\beta} P^*(\mu \otimes \delta_s) \right]
+ \frac{1}{2} \del_y^2 \del_t^{1-\beta} P^*(\mu \otimes \delta_s)
+ \mu \otimes \delta_s
\end{align*}
(compare \cite{HLS10PRL}).

\begin{remark}
The coefficients $a$, $b$, $\gamma$ and $K$ above match the coefficients of the
stochastic differential equation (7) in \cite{Magdziarz2014}
where the diffusivity $ =1$.
The Fokker--Planck equation also matches their equation (6).
A CTRW scaling limit whose diffusivity varies in space and time
is achieved e.g.\ if \eqref{eq:kernel1} is replaced by
$$K^c(x,s;dz,dw) = \mathcal N(dz | c^{-1/2} b(x,s), c^{-1} a(x,s)) h_\beta^c(w)\,dw,$$
where $\mathcal N(dz | m, s^2)$ denotes a univariate Gau\ss{}ian distribution 
with mean $m$ and variance $s^2$.
\end{remark}

\subsection{Traps of spatially varying depth}

\cite{Fedotov2012} study CTRWs with spatially varying ``anomalous exponent'' $\beta(x) \in (0,1)$.
They find that in the long-time limit the (lattice) CTRW process is localized at the lattice point where $\beta(x)$ attains its minimum, a phenomenon termed ``anomalous aggregation''.
Using flux balances, \cite{Chechkin2005a} derive a fractional diffusion equation
with a ``variable order'' Riemann-Liouville derivative, which we can now
rephrase in our framework.
In this example, we assume unbiased jumps of probability $1/2$ to the left and right,
and fix a Lipschitz continuous function $\beta(x) \in (\varepsilon, 1-\varepsilon)$ for some $\varepsilon > 0$. The waiting time at each lattice site has the density
$h_{\beta(x)}^c(w)$ as in \eqref{eq:hbetatau2}, with $\beta$ replaced by $\beta(x)$.
In the limit $c \to \infty$ with $(\Delta x) = 1/c$ we arrive at the coefficients
\begin{gather} \label{eq:ex2coeff}
a(x,s) = 1, \quad  b(x,s) = 0, \quad \gamma(x,s) = 0, \quad
 K(x;dz \times dw) = \delta_0(dz) h_{\beta(x)}(w)dw.
\end{gather}
As mentioned in \citet[p.272]{Bass1988}, the standard Lipschitz continuity and growth assumptions guarantee the existence and uniqueness of a strong (pathwise) solution to a stochastic differential equation with generator $\mathcal A$ given by \eqref{eq:DAgen} and \eqref{eq:ex2coeff}. The negative fractional derivative of variable order $\beta(x)$ is
\begin{align*}
\del_{-t}^{\beta(x)} f(x,t)
&= \int_{w > 0} [f(x,t) - f(x,t+w)]h_{\beta(x)}(w)\,dw,
\end{align*}
where $h_{\beta(x)}(w)$ is as in \eqref{eq:hbeta}, with $\beta$ dependent on $x$.
As in the previous example, we have 
$V(y,r) = r^{\beta(y)-1} / \Gamma(\beta(y))$, and
the Kolmogorov backward equation hence reads
\begin{align*}
\del_{-s}^{\beta(x)} Pf(x,s) = \frac{1}{2} \del_x^2 Pf(x,s)
+ \del_{-s}^{\beta(x)-1} f(x,s)
\end{align*}
and the FPE
\begin{align*}
\del_t P^*(\mu \otimes \delta_s) = -\del_y^2 \left [\del_t^{1-\beta(y)} P^*(\mu \otimes \delta_s)\right ] + \mu \otimes \delta_s.
\end{align*}

\begin{remark}
A different approach to spatially varying traps is taken in
\cite{Kolokoltsov09}. There, the generator
\begin{align*}
\mathcal A f(x,s)
&= \int_0^\infty \int_{S^{d-1}} (f(x+y,s)-f(x,s))
\frac{d|y|}{|y|^{1+\alpha}} S(x,s,\bar y)\, d_S \bar y
\\
&+ \frac{w(x,s)}{\Gamma(-\beta)}\int_0^\infty (f(x,s+v)-f(x,s))\frac{1}{v^{1+\beta}}
\,dv
\\
&=: \mathcal L f(x,s)
+ w(x,s) \partial_{-s}^\beta f(x,s)
\end{align*}
for the process $(A_r,D_r)$ is assumed, where
$\alpha \in (0,2)$, $\beta \in (0,1)$,
$\bar y = y / |y|$, $S(x,s,\bar y)\,d_S\bar y$ is a symmetric
Lebesgue-absolutely continuous measure
on the unit sphere and $w$ a measureable function.
The scaling limit process is explicitly constructed.
An application of Theorem 4.3 therein gives
\begin{align}
- \partial_t P(y,t) = \partial_t^\beta \left[ w(y,t) U(y,t) \right]
\end{align}
where $U(y,t)$ is the density of the potential measure of the
Feller process $(A_r,D_r)$ and $P(\cdot, t)$ the probability density
of $X_t$.
Assuming that $w(x,s) = w(x)$ does not depend on the time
variable, we may go one step further and write the FPE for this CTRW limit process as
\begin{align*}
\del_t P^*[\mu(dy)\otimes \delta_s(dt)]
= \mathcal L^* \frac{1}{w(y)} \del_t^{1-\beta} P^* [\mu(dy)\otimes \delta_s(dt)] + \mu(dy)\otimes \delta_s(dt);
\end{align*}
note that, unlike in the previous example, we now have
$$V(y,dt) = \frac{t^{\beta-1}}{\Gamma(\beta) w(y)}\,dt,$$
and $\mathcal L$ is formally self-adjoint.
\end{remark}
The Kolmogorov backward equation reads
\begin{align*}
-\mathcal L P f(x,s) - w(x,s) \partial_{-s}^\beta P f(x,s)
= w(x,s) \partial_{-s}^{\beta-1} f(x,s),
\end{align*}
where $w(x,s)$ may be time-dependent.

\subsection{Space- and time-dependent L\'evy Walks}

The standard L\'evy Walk consists of i.i.d.\ movements with constant speed, where directions are drawn from a probability distribution $\lambda(d\theta)$ on the unit sphere $S^{d-1}$ in $\Rd$ and movement lengths are drawn from a probability distribution which lies in the domain of attraction of a stable law, e.g.\ $h^c_\beta(w)$ \eqref{eq:hbetatau2}.
We consider the case $\beta \in (0,1)$, which is termed ``ballistic'' since the second moment grows quadratically \citep{Sokolov_Book}.
Coupled CTRWs, in which waiting times of length $W_k$ come with jumps of size
$|J_k| = W_k$, serve as an approximation of a L\'evy Walk with velocity $1$.

In this example, we consider a CTRW approximation of a L\'evy Walk with space- and time-dependent drift $b(x,s)$.
Such a CTRW is given by the Markov chain with transition kernel
\begin{align}
K^c(x,t;B\times I)
= \int_{\theta \in S^{d-1}} \int_{r > 0} \1_B(
r\theta + b(x,t)/c) \1_I(
r)
h_\beta^c(r)dr\, \lambda(d\theta),
\end{align}
which converges to a limiting space-time process $(A_r, D_r)$ with generator \eqref{eq:DAgen} and coefficients
\begin{align*}
a &= 0, & b^i(x,s) &= {\rm given}, & \gamma(x,s) &= 0,
\end{align*}
\begin{align*}
K(x,s;B \times I)
= K(B \times I)
= \int_{\theta \in S^{d-1}} \int_{r > 0} \1_B(
r\theta) \1_I(
r)\, h_\beta(r)\,dr\,\lambda(d\theta).
\end{align*}
(Note that here $b(x,s)$ is relative to there being no cut-off function $\1(\|z\| < 1)$ in \eqref{eq:DAgen}.)
The infinitesimal generator has the pseudo-differential representation \citep{Jurlewicz,triCTRW}
\begin{align*}
\mathcal A f(x,s)
&= b^i(x,s) \del_{x_i} f(x,s)
+ \int_{\theta \in S^{d-1}} \int_{w > 0}\left [f(x+w\theta, s+w) - f(x,s)\right ] h_\beta(w)\,dw\,\lambda(d\theta)
\\
&= b^i(x,s) \del_{x_i} f(x,s)
- \int_{\theta \in S^{d-1}} \left (-\langle \theta, \nabla_x\rangle - \del_s\right )^{\beta} f(x,s) \lambda(d\theta).
\end{align*}
The Kolmogorov backwards equation for the CTRW scaling limit is thus
\begin{align*}
b^i(x,s) \del_{x_i} Pf(x,s)
- \int_{\theta \in S^{d-1}}
\left (- \langle \theta, \nabla \rangle - \del_{-s}\right )^{\beta} Pf(x,s)\, \lambda(d\theta) = \del_{-s}^{\beta-1}f(x,s).
\end{align*}
As $H(x,s;w) = w^{-\beta} / \Gamma(1-\beta)$ as in Example 6.1,
the governing FPE is
\begin{align*}
\mathcal A^* \del_t^{1-\beta} P^*[\mu \otimes \delta_s](dy,dt)
= -\mu(dy) \otimes \delta_s(dt)
\end{align*}
The generator $\mathcal A$ does not have a decomposition into
$\mathcal L + \mathcal D$ as in \eqref{eq:decoupK}, and hence
we stop here.

\subsection*{Acknowledgements}
The authors thank Prof.\ Ren\'e Schilling, Prof.\ Mark Meerschaert and Prof.\ Atmah Mandrekar for their helpful advice in preparing this manuscript.
B.~Baeumer was supported by the Marsden Fund Council from Government funding, administered by the Royal Society of New Zealand.
P.~Straka was supported by the UNSW Science Early Career Research Grant and
the Australian Research Council's Discovery Early Career Research Award.

\bibliographystyle{abbrvnat}

\begin{thebibliography}{40}
\providecommand{\natexlab}[1]{#1}
\providecommand{\url}[1]{\texttt{#1}}
\expandafter\ifx\csname urlstyle\endcsname\relax
  \providecommand{\doi}[1]{doi: #1}\else
  \providecommand{\doi}{doi: \begingroup \urlstyle{rm}\Url}\fi

\bibitem[Applebaum(2009)]{Applebaum}
D.~Applebaum.
\newblock \emph{{L{\'{e}}vy Processes and Stochastic Calculus}}, volume 116 of
  \emph{Cambridge Studies in Advanced Mathematics}.
\newblock Cambridge University Press, 2nd edition, may 2009.

\bibitem[Baeumer and Meerschaert(2001)]{Baeumer2001}
B.~Baeumer and M.~M. Meerschaert.
\newblock {Stochastic solutions for fractional Cauchy problems}.
\newblock \emph{Fract. Calc. Appl. Anal.}, 4\penalty0 (4):\penalty0 481--500,
  2001.

\bibitem[Baeumer et~al.(2005)Baeumer, Meerschaert, and Mortensen]{BMMST}
B.~Baeumer, M.~M. Meerschaert, and J.~Mortensen.
\newblock {Space-time fractional derivative operators}.
\newblock \emph{Proc. Am. Math. Soc.}, 133\penalty0 (8):\penalty0 2273--2282,
  2005.
\newblock ISSN 0002-9939.

\bibitem[Bajlekova(2001)]{BajlekovaThesis}
E.~G. Bajlekova.
\newblock \emph{{Fractional Evolution Equations in Banach Spaces}}.
\newblock {PhD thesis}, Eindhoven University of Technology, 2001.

\bibitem[Barkai et~al.(2000)Barkai, Metzler, and Klafter]{BMK00}
E.~Barkai, R.~Metzler, and J.~Klafter.
\newblock {From continuous time random walks to the fractional Fokker-Planck
  equation}.
\newblock \emph{Phys. Rev. E}, 61\penalty0 (1):\penalty0 132--138, jan 2000.
\newblock \doi{10.1103/PhysRevE.61.132}.

\bibitem[Bass(1988)]{Bass1988}
R.~F. Bass.
\newblock {Uniqueness in law for pure jump Markov processes}.
\newblock \emph{Probab. Theory Relat. Fields}, 287:\penalty0 271--287, 1988.

\bibitem[Becker-Kern et~al.(2004)Becker-Kern, Meerschaert, and
  Scheffler]{coupleCTRW}
P.~Becker-Kern, M.~M. Meerschaert, and H.~Scheffler.
\newblock {Limit theorems for coupled continuous time random walks}.
\newblock \emph{Ann. Probab.}, 32\penalty0 (1):\penalty0 730--756, 2004.

\bibitem[Berkowitz et~al.(2006)Berkowitz, Cortis, Dentz, and
  Scher]{Berkowitz06}
B.~Berkowitz, A.~Cortis, M.~Dentz, and H.~Scher.
\newblock {Modeling non-Fickian transport in geological formations as a
  continuous time random walk}.
\newblock \emph{Rev. Geophys.}, 44\penalty0 (2):\penalty0 RG2003, 2006.
\newblock ISSN 8755-1209.
\newblock \doi{10.1029/2005RG000178}.

\bibitem[Bertoin(1999)]{Bertoin04}
J.~Bertoin.
\newblock {Subordinators: examples and applications}.
\newblock \emph{Lect. Probab. theory Stat.}, 1717:\penalty0 1--91, 1999.
\newblock \doi{10.1007/b72002}.

\bibitem[Carmi et~al.(2010)Carmi, Turgeman, and Barkai]{Carmi2010}
S.~Carmi, L.~Turgeman, and E.~Barkai.
\newblock {On Distributions of Functionals of Anomalous Diffusion Paths}.
\newblock \emph{J. Stat. Phys.}, 141\penalty0 (6):\penalty0 1071--1092, nov
  2010.
\newblock \doi{10.1007/s10955-010-0086-6}.

\bibitem[Chechkin et~al.(2005)Chechkin, Gorenflo, and Sokolov]{Chechkin2005a}
A.~V. Chechkin, R.~Gorenflo, and I.~M. Sokolov.
\newblock {Fractional diffusion in inhomogeneous media}.
\newblock \emph{J. Phys. A. Math. Gen.}, 38\penalty0 (42):\penalty0 L679--L684,
  oct 2005.
\newblock \doi{10.1088/0305-4470/38/42/L03}.

\bibitem[Fedotov and Falconer(2012)]{Fedotov2012}
S.~Fedotov and S.~Falconer.
\newblock {Subdiffusive master equation with space-dependent anomalous exponent
  and structural instability}.
\newblock \emph{Phys. Rev. E}, 85\penalty0 (3):\penalty0 031132, mar 2012.
\newblock \doi{10.1103/PhysRevE.85.031132}.

\bibitem[Fedotov and Iomin(2007)]{FedotovIomin}
S.~Fedotov and A.~Iomin.
\newblock {Migration and proliferation dichotomy in tumor-cell invasion}.
\newblock \emph{Phys. Rev. Lett.}, 98:\penalty0 118101, 2007.

\bibitem[Hahn et~al.(2010)Hahn, Kobayashi, and Umarov]{HKU10}
M.~G. Hahn, K.~Kobayashi, and S.~Umarov.
\newblock {SDEs Driven by a Time-Changed L{\'{e}}vy Process and Their
  Associated Time-Fractional Order Pseudo-Differential Equations}.
\newblock \emph{J. Theor. Probab.}, may 2010.
\newblock ISSN 0894-9840.
\newblock \doi{10.1007/s10959-010-0289-4}.

\bibitem[Henry and Wearne(2000)]{Henry2000}
B.~Henry and S.~L. Wearne.
\newblock {Fractional reaction-diffusion}.
\newblock \emph{Physica A}, 276\penalty0 (3-4):\penalty0 448--455, feb 2000.
\newblock \doi{10.1016/S0378-4371(99)00469-0}.

\bibitem[Henry et~al.(2010)Henry, Langlands, and Straka]{HLS10PRL}
B.~Henry, T.~Langlands, and P.~Straka.
\newblock {Fractional Fokker-Planck Equations for Subdiffusion with Space- and
  Time-Dependent Forces}.
\newblock \emph{Phys. Rev. Lett.}, 105\penalty0 (17):\penalty0 170602, 2010.
\newblock \doi{10.1103/PhysRevLett.105.170602}.

\bibitem[Jacod and Shiryaev(2002)]{JacodShiryaev}
J.~Jacod and A.~N. Shiryaev.
\newblock \emph{{Limit Theorems for Stochastic Processes}}.
\newblock Springer, dec 2002.

\bibitem[Jurlewicz et~al.(2012)Jurlewicz, Kern, Meerschaert, and
  Scheffler]{Jurlewicz}
A.~Jurlewicz, P.~Kern, M.~M. Meerschaert, and H.~P. Scheffler.
\newblock {Fractional governing equations for coupled random walks}.
\newblock \emph{Comput. Math. with Appl.}, 64\penalty0 (10):\penalty0
  3021--3036, nov 2012.
\newblock ISSN 08981221.
\newblock \doi{10.1016/j.camwa.2011.10.010}.

\bibitem[Klafter and Sokolov(2011)]{Sokolov_Book}
J.~Klafter and I.~M. Sokolov.
\newblock \emph{{First steps in random walks: from tools to applications}}.
\newblock Oxford University Press, Oxford, 2011.

\bibitem[Kobayashi(2010)]{Kobayashi2010a}
K.~Kobayashi.
\newblock {Stochastic Calculus for a Time-Changed Semimartingale and the
  Associated Stochastic Differential Equations}.
\newblock \emph{J. Theor. Probab.}, oct 2010.
\newblock ISSN 0894-9840.
\newblock \doi{10.1007/s10959-010-0320-9}.

\bibitem[Kolokoltsov(2009)]{Kolokoltsov09}
V.~N. Kolokoltsov.
\newblock {Generalized Continuous-Time Random Walks, Subordination by Hitting
  Times, and Fractional Dynamics}.
\newblock \emph{Theory Probab. Its Appl.}, 53\penalty0 (4):\penalty0 594--609,
  jan 2009.
\newblock \doi{10.1137/S0040585X97983857}.

\bibitem[Kolokoltsov(2011)]{kolokoltsov2011markov}
V.~N. Kolokoltsov.
\newblock \emph{{Markov Processes, Semigroups, and Generators}}, volume~38.
\newblock Walter de Gruyter, 2011.

\bibitem[Langlands and Henry(2005)]{Langlands2005a}
T.~Langlands and B.~Henry.
\newblock {The accuracy and stability of an implicit solution method for the
  fractional diffusion equation}.
\newblock \emph{J. Comput. Phys.}, 205\penalty0 (2):\penalty0 719--736, may
  2005.
\newblock \doi{10.1016/j.jcp.2004.11.025}.

\bibitem[Langlands and Henry(2010)]{HenryLanglands10chemo}
T.~Langlands and B.~Henry.
\newblock {Fractional chemotaxis diffusion equations}.
\newblock \emph{Phys. Rev. E}, 81\penalty0 (5):\penalty0 051102, may 2010.
\newblock \doi{10.1103/PhysRevE.81.051102}.

\bibitem[Magdziarz et~al.(2014)Magdziarz, Gajda, and Zorawik]{Magdziarz2014}
M.~Magdziarz, J.~Gajda, and T.~Zorawik.
\newblock {Comment on Fractional Fokker-Planck Equation with Space and Time
  Dependent Drift and Diffusion}.
\newblock \emph{J. Stat. Phys.}, 154\penalty0 (5):\penalty0 1241--1250, 2014.
\newblock ISSN 00224715.
\newblock \doi{10.1007/s10955-014-0919-9}.

\bibitem[Meerschaert and Scheffler(2004)]{limitCTRW}
M.~M. Meerschaert and H.~Scheffler.
\newblock {Limit Theorems for Continuous-Time Random Walks with Infinite Mean
  Waiting Times}.
\newblock \emph{J. Appl. Probab.}, 41\penalty0 (3):\penalty0 623--638, sep
  2004.
\newblock ISSN 0021-9002.
\newblock \doi{10.1239/jap/1091543414}.

\bibitem[Meerschaert and Scheffler(2008)]{triCTRW}
M.~M. Meerschaert and H.~Scheffler.
\newblock {Triangular array limits for continuous time random walks}.
\newblock \emph{Stoch. Process. Appl.}, 118\penalty0 (9):\penalty0 1606--1633,
  sep 2008.
\newblock ISSN 03044149.
\newblock \doi{10.1016/j.spa.2007.10.005}.

\bibitem[Meerschaert and Sikorskii(2011)]{MeerschaertSikorskii}
M.~M. Meerschaert and A.~Sikorskii.
\newblock \emph{{Stochastic models for fractional calculus}}.
\newblock De Gruyter, Berlin/Boston, 2011.

\bibitem[Meerschaert and Straka(2014)]{SemiMarkovCTRW}
M.~M. Meerschaert and P.~Straka.
\newblock {Semi-Markov approach to continuous time random walk limit
  processes}.
\newblock \emph{Ann. Probab.}, 42\penalty0 (4):\penalty0 1699--1723, jul 2014.
\newblock \doi{10.1214/13-AOP905}.

\bibitem[Metzler and Klafter(2000)]{Metzler2000}
R.~Metzler and J.~Klafter.
\newblock {The random walk's guide to anomalous diffusion: a fractional
  dynamics approach}.
\newblock \emph{Phys. Rep.}, 339\penalty0 (1):\penalty0 1--77, dec 2000.
\newblock \doi{10.1016/S0370-1573(00)00070-3}.

\bibitem[Nane and Ni(2015)]{Nane2015}
E.~Nane and Y.~Ni.
\newblock {Stochastic Solution of Fractional Fokker-Planck Equations with
  Space-Time-Dependent Coefficients}.
\newblock \penalty0 (0):\penalty0 1--15, 2015.

\bibitem[Neupauer and Wilson(1999)]{Neupauer1999}
R.~M. Neupauer and J.~L. Wilson.
\newblock {Adjoint method for obtaining backward-in-time location and travel
  time probabilities of a conservative groundwater contaminant}.
\newblock \emph{Water Resour. Res.}, 35\penalty0 (11):\penalty0 3389--3398, nov
  1999.
\newblock \doi{10.1029/1999WR900190}.

\bibitem[Phillips(1955)]{Phillips1955}
R.~Phillips.
\newblock The adjoint semigroup.
\newblock \emph{Pacific J. Math.}, 5:\penalty0 269--283, 1955.



\bibitem[Pr{\"{u}}ss(2012)]{PrussBook}
J.~Pr{\"{u}}ss.
\newblock \emph{{Evolutionary Integral Equations and Applications}}.
\newblock Springer, Basel, 2012.
\newblock \doi{10.1007/978-3-0348-0499-8}.

\bibitem[Raberto et~al.(2002)Raberto, Scalas, and Mainardi]{Raberto2002}
M.~Raberto, E.~Scalas, and F.~Mainardi.
\newblock {Waiting-times and returns in high-frequency financial data: an
  empirical study}.
\newblock \emph{Phys. A Stat. Mech. its Appl.}, 314\penalty0 (1-4):\penalty0
  749--755, nov 2002.
\newblock \doi{10.1016/S0378-4371(02)01048-8}.

\bibitem[Rudin(1987)]{rudin1987real}
W.~Rudin.
\newblock \emph{{Real and complex analysis}}.
\newblock Mathematics series. McGraw-Hill, 1987.
\newblock ISBN 9780070542341.

\bibitem[Schumer et~al.(2003)Schumer, Benson, Meerschaert, and
  Baeumer]{SchumerMIM}
R.~Schumer, D.~A. Benson, M.~M. Meerschaert, and B.~Baeumer.
\newblock {Fractal mobile/immobile solute transport}.
\newblock \emph{Water Resour. Res.}, 39\penalty0 (10), oct 2003.
\newblock \doi{10.1029/2003WR002141}.

\bibitem[Sokolov and Klafter(2006)]{Sokolov2006e}
I.~M. Sokolov and J.~Klafter.
\newblock {Field-Induced Dispersion in Subdiffusion}.
\newblock \emph{Phys. Rev. Lett.}, 97\penalty0 (14):\penalty0 1--4, oct 2006.
\newblock ISSN 0031-9007.
\newblock \doi{10.1103/PhysRevLett.97.140602}.
\newblock URL \url{http://link.aps.org/doi/10.1103/PhysRevLett.97.140602}.

\bibitem[Straka and Henry(2011)]{StrakaHenry}
P.~Straka and B.~Henry.
\newblock {Lagging and leading coupled continuous time random walks, renewal
  times and their joint limits}.
\newblock \emph{Stoch. Process. their Appl.}, 121\penalty0 (2):\penalty0
  324--336, feb 2011.
\newblock \doi{10.1016/j.spa.2010.10.003}.

\bibitem[Umarov(2015)]{umarov2015introduction}
S.~Umarov.
\newblock {Introduction to Fractional and Pseudo-Differential Equations with
  Singular Symbols}, 2015.

\bibitem[Weron and Magdziarz(2008)]{Weron2008}
A.~Weron and M.~Magdziarz.
\newblock {Modeling of subdiffusion in space-time-dependent force fields beyond
  the fractional Fokker-Planck equation}.
\newblock \emph{Phys. Rev. E}, 77\penalty0 (3):\penalty0 1--6, mar 2008.
\newblock ISSN 1539-3755.
\newblock \doi{10.1103/PhysRevE.77.036704}.

\end{thebibliography}

\end{document}